\pgfplotsset{compat=1.14}
\title[Failure of Calder\'on-Zygmund estimates for the {\MakeLowercase{p}}-Laplace equation]{Failure of $L^r$-Calder\'on-Zygmund estimates for the {\MakeLowercase{p}}-Laplace equation for small $r$}
\author{Armin Schikorra}
\address[Armin Schikorra]{Department of Mathematics,
University of Pittsburgh,
301 Thackeray Hall,
Pittsburgh, PA 15260, USA}
\email{armin@pitt.edu}
\definecolor{indigo}{rgb}{0.29, 0.0, 0.51}
\definecolor{p1}{gray}{0.4}
\definecolor{p2}{gray}{0.6}
\definecolor{p3}{gray}{0.98}
\definecolor{p4}{gray}{0.8}
\definecolor{p5}{gray}{0.9}
\def\eps{\varepsilon}
\def\B{\mathbb{B}}
\def\N{{\mathbb N}}
\renewcommand{\div}{{\rm div}}
\newtheorem{theorem}{Theorem}
\newtheorem{lemma}[theorem]{Lemma}
\newtheorem{proposition}[theorem]{Proposition}
\newtheorem{remark}[theorem]{Remark}
\newtheorem{definition}[theorem]{Definition}
\newtheorem{conjecture}[theorem]{Conjecture}
\def\dist{{\rm dist\,}}
\def\lip{{\rm Lip\,}}
\def\rank{{\rm rank\,}}
\def\supp{{\rm supp\,}}
\newcommand{\R}{\mathbb{R}}
\newcommand{\brac}[1]{\left (#1 \right )}
\newcommand{\abs}[1]{\left |#1 \right |}
\newcommand{\barint}{
\rule[.036in]{.12in}{.009in}\kern-.16in \displaystyle\int }
\newcommand{\barcal}{\mbox{$ \rule[.036in]{.11in}{.007in}\kern-.128in\int $}}
\def\mvint_#1{\mathchoice
          {\mathop{\vrule width 6pt height 3 pt depth -2.5pt
                  \kern -8pt \intop}\nolimits_{\kern -3pt #1}}%
          {\mathop{\vrule width 5pt height 3 pt depth -2.6pt
                  \kern -6pt \intop}\nolimits_{#1}}%
          {\mathop{\vrule width 5pt height 3 pt depth -2.6pt
                  \kern -6pt \intop}\nolimits_{#1}}%
          {\mathop{\vrule width 5pt height 3 pt depth -2.6pt
                  \kern -6pt \intop}\nolimits_{#1}}}
\numberwithin{theorem}{section} \numberwithin{equation}{section}
\newcommand{\lap}{\Delta }
\newcommand{\aleq}{\precsim}
\newcommand{\ageq}{\succsim}
\newcommand{\aeq}{\asymp}
\let\latexchi\chi
\renewcommand\chi{\@ifnextchar_\sub@chi\latexchi}
\newcommand{\sub@chi}[2]{
  \@ifnextchar^{\subsup@chi{#2}}{\latexchi_{#2}}%
}
\newcommand{\subsup@chi}[3]{
  \latexchi_{#1}^{#3}%
}
\def\tikz@arc@opt[#1]{
  {%
    \tikzset{every arc/.try,#1}%
    \pgfkeysgetvalue{/tikz/start angle}\tikz@s
    \pgfkeysgetvalue{/tikz/end angle}\tikz@e
    \pgfkeysgetvalue{/tikz/delta angle}\tikz@d
    \ifx\tikz@s\pgfutil@empty%
      \pgfmathsetmacro\tikz@s{\tikz@e-\tikz@d}
    \else
      \ifx\tikz@e\pgfutil@empty%
        \pgfmathsetmacro\tikz@e{\tikz@s+\tikz@d}
      \fi%
    \fi
    \tikz@arc@moveto
    \xdef\pgf@marshal{\noexpand%
    \tikz@do@arc{\tikz@s}{\tikz@e}
      {\pgfkeysvalueof{/tikz/x radius}}
      {\pgfkeysvalueof{/tikz/y radius}}}%
  }%
  \pgf@marshal%
  \tikz@arcfinal%
}
\let\tikz@arc@moveto\relax
\def\tikz@arc@movetolineto#1{%
  \def\tikz@arc@moveto{\tikz@@@parse@polar{\tikz@arc@@movetolineto#1}(\tikz@s:\pgfkeysvalueof{/tikz/x radius} and \pgfkeysvalueof{/tikz/y radius})}}
\def\tikz@arc@@movetolineto#1#2{#1{\pgfpointadd{#2}{\tikz@last@position@saved}}}
\tikzset{%
  move to start/.code=\tikz@arc@movetolineto\pgfpathmoveto,%
  line to start/.code=\tikz@arc@movetolineto\pgfpathlineto}
\newcommand{\tnn}{\aleph} 
\newcommand{\tnell}{\mathfrak{l}}
\newcommand{\tni}{\mathfrak{i}}
\newcommand{\tnj}{\mathfrak{j}}
\newcommand{\tnk}{\mathfrak{k}}
\begin{document}
\begin{abstract}
Let $p \neq 2$. For any small enough $r> \max \{p-1,1\}$ and for any $\Lambda > 1$ there exists a Lipschitz function $u$ and a bounded vectorfield $f$ such that
\[
\begin{cases}
 \div(|\nabla u|^{p-2} \nabla u) = \div (f) \quad& \text{in $\mathbb{B}^2$}\\
u=0 &\text{on $\partial \mathbb{B}^2$}
\end{cases}
 \]
but
\[
 \int_{\mathbb{B}^2} |\nabla u|^r \not \leq  \Lambda \int_{\mathbb{B}^2} |f|^{\frac{r}{p-1}}.
\]
This disproves a conjecture by Iwaniec from 1983. The proof adapts recent convex-integration ideas by Colombo--Tione.
\end{abstract}
\maketitle
%

\section{Introduction}
Classical Calder\'on-Zygmund theory states that if we have a variational solution $u \in W^{1,2}(\Omega)$ which solves in distributional sense
\[
 \begin{cases} \lap u = \div (f) \quad &\text{in a smooth domain $\Omega \subset \R^n$}\\
  u = 0 \quad &\text{on $\partial \Omega$}
 \end{cases}
\]
then for any $r \in (1,\infty)$
\[
 \|\nabla u\|_{L^r(\Omega)} \aleq \|f\|_{L^r(\Omega)}
\]
For small $r$, the analogue estimate where the linear elliptic equation $\lap u = \div (f)$ is replaced with the $p$-Laplace equation $\div(|\nabla u|^{p-2} \nabla u)= \div(f)$ is not known. Indeed, in 1983 Iwaniec \cite[pages 294-295]{I83} posed the following conjecture regarding, see also \cite[page 36]{DOI05}.

\begin{conjecture}[Iwaniec]\label{conjv1}
For any $r > \max\{p-1,1\}$, $p > 1$, and $n \geq 2$ there exists a constant $C_{r,p,n}$ such that whenever $u \in W^{1,r}_0(\B^n)\cap W^{1,p}_0(\B^n)$ and $f \in L^{\frac{r}{p-1}}\cap L^{p'}(\B^n,\R^n)$ (where $\B^n$ is the $n$-unit ball) solves
\begin{equation}\label{eq:pdeconj}
 \div(|\nabla u|^{p-2} \nabla u) = \div(f) \quad \text{in $\B^n$}
\end{equation}
then
\begin{equation}\label{eq:ineq}
 \int_{\B^n} |\nabla u|^r \leq C_{r,p,n} \int_{\B^n} |f|^{\frac{r}{p-1}}.
\end{equation}
\end{conjecture}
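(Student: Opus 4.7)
The plan is to \emph{disprove} the conjecture, not prove it: the paper constructs a counterexample by convex integration, adapting Colombo--Tione. In dimension $n=2$ the equation \eqref{eq:pdeconj} says that $|\nabla u|^{p-2}\nabla u - f$ is divergence-free, so introducing a scalar stream function $v$ one may write $|\nabla u|^{p-2}\nabla u - f = \nabla^\perp v$. The task becomes: find a Lipschitz pair $(u,v) : \B^2 \to \R^2$ satisfying the pointwise nonlinear relation
\[
 |\nabla u|^{p-2}\nabla u - \nabla^\perp v = f,
\]
with $f$ bounded (hence automatically in $L^{r/(p-1)}$) but with $\int_{\B^2} |\nabla u|^r$ arbitrarily large compared to $\int_{\B^2} |f|^{r/(p-1)}$.

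I would reformulate this as a differential inclusion for the $2\times 2$ matrix $M := (\nabla u \mid \nabla v)$ into a set $K \subset \R^{2\times 2}$ encoding both a uniform bound $|f|\le 1$ and an asymmetric ``bad'' vertex where $|\nabla u|$ is large while $|\nabla v|$ stays small. Because $r$ is only just above $\max\{p-1,1\}$, the left-hand side of \eqref{eq:ineq} is extremely sensitive to such concentration: a small-measure region on which $|\nabla u|$ is very large contributes a lot to $\int|\nabla u|^r$ while costing little to $\int|f|^{r/(p-1)}$. Following the Colombo--Tione template, I would search for a $T_N$-configuration $\{P_1,\dots,P_N\} \subset K$: matrices with no pairwise rank-one connection but admitting a rank-one ``staircase'' decomposition, so that they sit in the rank-one convex hull of $K$. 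The M\"uller--\v{S}ver\'ak convex-integration / Baire-category machinery then produces a compactly supported Lipschitz map $(u,v)$ whose gradient takes values in $K$ and concentrates on the bad vertex on a set of prescribed measure. Choosing the vertex heights appropriately and rescaling produces, for each $\Lambda > 1$, a counterexample to \eqref{eq:ineq}; the boundary condition $u=0$ on $\partial \B^2$ is ensured by compact support inside the ball.

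The main obstacle will be locating the $T_N$-configuration inside $K$. The compatibility constraint comes from the $(p-1)$-homogeneous nonlinearity $\xi \mapsto |\xi|^{p-2}\xi$, whose level sets are \emph{curved} rather than affine, so rank-one connections must be analyzed along nonlinear graphs and the staircase equations solved in a genuinely nonlinear regime. One must also verify that the chosen vertices, together with small neighbourhoods of them, lie in the rank-one convex hull of the admissible region with enough slack for the convex-integration perturbation scheme to converge. This nonlinear rank-one geometry is the structural reason for the two restrictions in the statement: $p \neq 2$ (for $p=2$ the equation is linear and the estimate holds by classical Calder\'on--Zygmund theory) and ``$r$ small enough'' (the narrow window near the threshold $\max\{p-1,1\}$ where the geometric construction closes up).
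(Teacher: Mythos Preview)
Your reformulation via the stream function $v$ and the general convex-integration framework are exactly right, and match the paper. However, two points diverge from the actual proof, one of which is a genuine gap in the mechanism you describe.

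\textbf{The tool.} The paper does \emph{not} use $T_N$-configurations (which, by definition, have no pairwise rank-one connections and require Baire-category or more delicate convex-hull analysis). Instead it builds an explicit \emph{laminate of finite order}: starting from $\delta_0$, it performs elementary rank-one splittings $\delta_{A_\tni} \to \alpha_{\tni+1}\delta_{B_{\tni+1}} + \beta_{\tni+1}\delta_{C_{\tni+1}} + \gamma_{\tni+1}\delta_{A_{\tni+1}}$ along diagonal matrices, then invokes the standard M\"uller--Sychev/Astala--Faraco--Sz\'ekelyhidi result that any laminate of finite order is realized (up to error $\delta$) by a piecewise-affine Lipschitz map. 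No rank-one convex hull computation is needed; everything is written down explicitly.

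\textbf{The mechanism.} Your description---concentrate $|\nabla u|$ on a small-measure ``bad vertex'' where $|f|\le 1$---is not what drives the contradiction and would not close. The key observation (equation~(3.3) in the paper) is that the matrices $B_\tni, C_\tni$ lie on the nonlinear constraint set $K_p=\{|\nabla u|^{p-2}\nabla u = \nabla^\perp v\}$, so $f=0$ \emph{exactly} there. The laminate is designed so that \emph{almost all} of its mass sits on $\bigcup_\tni\{\pm B_\tni,\pm C_\tni\}$, with only residual mass $\bar\Gamma_\tnn \precsim \tnn^{-\bar q}$ on $\pm A_\tnn$ (where $f$ is large). Thus $\int|\nabla u|^r$ is bounded below by a fixed constant (since $|\nabla u|\succsim 1$ everywhere), while $\int|f|^{r/(p-1)}$ splits into a small $O(\delta^r)$ contribution from the good set and an $O(\tnn^{r-\bar q})$ contribution from the bad set, both of which can be made arbitrarily small because $r<\bar q$. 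The quantitative decay rate $\bar\Gamma_\tnn\precsim\tnn^{-\bar q}$ comes from a product estimate on the splitting ratios $\gamma_\tni$, and the threshold $\bar q_1 = \sup_b\bigl(\frac{p-1}{b^{p-1}+1}+\frac{b}{b+1}\bigr)$ arises precisely from this computation---which also explains why $p=2$ fails ($\bar q_1=1$ there).
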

As for positive results, it is known that there exists some $r_0 < p$ such that \eqref{eq:ineq} is true for $r \in (r_0,\infty)$ and for $BMO$, see in particular \cite{I83,IM89,DiBM93,DKS12}. For a recent overview over known results of Calderon-Zygmund type estimates we refer to the exposition in \cite{BDW20}. Let us also remark, that if $\div f$ on the right-hand side of \eqref{eq:pdeconj} is replaced with a measure $\mu$ then Calder\`on-Zygmund theory \cite{M07} and pointwise potential estimates hold for $|\nabla u|^{p-1}$ -- notably even in the vectorial case, see \cite{KM18} and references within, which might lead one to the (false) hope that \eqref{conjv1} could indeed be true also for small $r>p-1$.

Related to \Cref{conjv1}, in 1994 Iwaniec and Sbordone \cite{IS94} posed the following problem regarding the \emph{regularity theory for very weak solutions}.
\begin{conjecture}[Iwaniec-Sbordone]\label{conjIS}
For any $r > p-1$ assume $u \in W^{1,r}(B)$ solves in distributional sense
\[
 \div(|\nabla u|^{p-2} \nabla u) = 0 \quad \text{in $B$}
\]
then $u \in W^{1,p}_{loc}$.
\end{conjecture}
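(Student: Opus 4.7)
The plan is to follow the Hodge decomposition strategy pioneered by Lewis and by Iwaniec--Sbordone themselves, which is known to establish the conclusion for $r$ in some left-neighborhood $(p-\eps_0,p)$ of $p$, and then to try to push it as far as possible toward the endpoint $r=p-1$.

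The first step is to localize on $B'\subset\subset B$ via a cutoff $\eta\in C_c^\infty(B)$ with $\eta\equiv 1$ on $B'$, so that the task becomes bounding $\int_{B'}|\nabla u|^p$ by quantities involving only $\int_B|\nabla u|^r$. The central difficulty is that $u\in W^{1,r}$ with $r<p$ is not itself an admissible test function in the very weak formulation. To manufacture one, I would apply a Hodge decomposition to a vector field of the form $\nabla(\eta u)\,|\nabla(\eta u)|^{s}$ with $s=p-r-\delta>0$ for a small $\delta$, writing it as $\nabla\varphi+\mathbf{h}$ with $\varphi\in W^{1,p'}_0(B)$ admissible against the flux $|\nabla u|^{p-2}\nabla u$ and $\mathbf{h}$ a divergence-free remainder. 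Testing the PDE against $\varphi$ and performing the usual Iwaniec bookkeeping should yield a Caccioppoli-type estimate of the form
\[
 \int_{B'}|\nabla u|^{p}\;\leq\; C\int_{B}|\nabla u|^{r}|\nabla\eta|^{p-r}\;+\;(\text{commutator remainder}),
\]
after which a reverse Hölder inequality together with Gehring's lemma would upgrade $|\nabla u|\in L^r$ to $|\nabla u|\in L^p_\loc$, giving $u\in W^{1,p}_\loc$.

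The main obstacle is the commutator remainder: in the spirit of Iwaniec's commutator estimates for quasiregular maps, it is controlled by operator norms of Riesz transforms and related singular integrals acting on $L^{r/(p-1)}$, and these constants blow up as $r\downarrow p-1$. With current technology the loop closes only on a narrow window below $p$; extending it to all $r>p-1$ seems to demand a genuinely new idea, perhaps a $p$-Laplacian-adapted Lipschitz truncation in the spirit of Diening--Kreuzer--Schwarzacher that sidesteps the singular integral route entirely. Finally, in light of the counterexamples to the companion \Cref{conjv1} produced in this paper, one should remain alert to the possibility that \Cref{conjIS} itself fails for small $r$; adapting the Colombo--Tione convex-integration scheme would, however, be substantially harder here, since no inhomogeneity is available to drive the iteration and one would instead have to build a homogeneous very weak solution whose $L^p$ gradient diverges.
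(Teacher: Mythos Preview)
The statement you are trying to prove is not a theorem of the paper; it is a \emph{conjecture} that the paper explicitly reports as \emph{false}. Immediately after stating \Cref{conjIS}, the paper writes: ``It turns out both conjectures are wrong. Indeed, quite recently, \Cref{conjIS} was disproved by Colombo and Tione \cite{CT22} with methods from convex integration.'' So there is no ``paper's own proof'' to compare against, and any attempt to prove the conjecture in full generality must fail.

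Your proposal is internally honest about this: you correctly note that the Hodge decomposition/commutator method closes only on a narrow window $r\in(p-\eps_0,p)$, and you even flag the possibility that the conjecture is false for small $r$. That instinct is right and is in fact the state of affairs. The concrete gap in your plan is therefore not a technical oversight but a structural one: you are outlining a proof strategy for a statement that has a counterexample. Your final remark that adapting Colombo--Tione would be ``substantially harder here, since no inhomogeneity is available'' understates what they actually achieved---they did build a homogeneous very weak $p$-harmonic map in $W^{1,r}\setminus W^{1,p}_{\loc}$ via staircase laminates, precisely the construction the present paper borrows and modifies for \Cref{conjv1}. So rather than trying to push the Iwaniec--Sbordone argument further, the productive direction (already carried out in \cite{CT22}) is the convex-integration one.
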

\Cref{conjv1} and \Cref{conjIS} are clearly related, but neither implies the other in an obvious way. Observe that in \Cref{conjv1} we crucially assume that $u$ already belongs to the energy space $W^{1,p}$, i.e. we are only interested in the (unique) variational solution.

It turns out both conjectures are wrong. Indeed, quite recently, \Cref{conjIS} was disproved by Colombo and Tione \cite{CT22} with methods from convex integration. Here we disprove \Cref{conjv1} and to the contrary establish the following for the unit ball $\B^2$.
\begin{theorem}\label{th:main}
Let $p\neq 2$. There exists $\bar{q} > \max\{p-1,1\}$ such that the following holds for any $\Lambda > 0$ and $r \in (\max\{p-1,1\},\bar{q})$.

There exists $u,v \in W^{1,p}_0\cap \lip(\B^2)$ (even piecewise affine) such that
\begin{equation}\label{eq:goal}
 \int_{\B^2} |\nabla u|^r > \Lambda \int_{\B^2} \abs{|\nabla u|^{p-2} \nabla u - \nabla^\perp v}^{\frac{r}{p-1}}.
\end{equation}
\end{theorem}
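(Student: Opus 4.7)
\medskip

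\noindent\emph{Sketch of the approach.} The plan is a convex-integration construction adapting the Colombo--Tione strategy \cite{CT22}. Writing $U = (u,v):\B^2 \to \R^2$ so that $\nabla U \in \R^{2\times 2}$ has rows $\nabla u$ and $\nabla v$, the target inequality \eqref{eq:goal} becomes $\int_{\B^2} F_\Lambda(\nabla U)\,dx > 0$, where
\[
F_\Lambda(X) := |X_1|^r - \Lambda\,\bigl||X_1|^{p-2}X_1 - X_2^\perp\bigr|^{r/(p-1)}, \qquad X = \begin{pmatrix} X_1 \\ X_2 \end{pmatrix}.
\]
The $p$-Laplace \emph{null set} $K := \{X \in \R^{2\times 2}:\ |X_1|^{p-2}X_1 = X_2^\perp\}$ is a two-dimensional submanifold on which the deficit vanishes and $F_\Lambda$ reduces to $|X_1|^r$. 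It therefore suffices to construct a piecewise affine $U \in W^{1,p}_0 \cap \lip(\B^2;\R^2)$ whose gradient distribution concentrates near $K$ with substantial mass on $K$-points of large $|X_1|$.

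The first step is the rank-one geometry of $K$. When $p=2$, $K$ is the linear Cauchy--Riemann subspace, is rank-one convex, and no such construction is possible; for $p \neq 2$ I would instead exhibit, by a direct computation, pairs $A^\pm \in K$ with $A^+ - A^-$ of rank one and $\tfrac12(A^+ + A^-) \notin K$. Crucially, $K$ is invariant under the anisotropic dilations $D_\lambda : (X_1,X_2) \mapsto (\lambda X_1, \lambda^{p-1}X_2)$, these dilations preserve rank-one compatibility, and $F_\Lambda \circ D_\lambda = \lambda^r F_\Lambda$. Exploiting this self-similarity I would iterate the base chord at geometrically growing scales to produce a \emph{staircase laminate} $\nu_N$ of finite order with barycentre $0$, supported on $N+1$ matrices whose first rows have norms $\sim \kappa^k$, $k=0,\dots,N$, with weights $p_k \sim \kappa^{-ak}$ for geometric constants $\kappa>1$ and $a>0$ determined by the chord. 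A direct computation then yields $\int F_\Lambda\,d\nu_N \sim \sum_{k} p_k \kappa^{rk}$, which grows without bound as $N \to \infty$ provided $r > a$; choosing $\bar q$ just above $\max\{p-1,1\}$ ensures that every $r \in (\max\{p-1,1\},\bar q)$ satisfies this.

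The second step is to realize $\nu_N$ as the gradient distribution of an honest piecewise affine Lipschitz map on $\B^2$ with zero boundary values, using the standard laminate realization theorem (M\"uller--\v{S}ver\'ak, Kirchheim; the zero-boundary-value version on a ball is already present in \cite{CT22}). Outside an arbitrarily small bad set of Lebesgue measure $\eps$, $\nabla U$ takes the finitely many matrix values in the support of $\nu_N$, and because $\nu_N$ is compactly supported the resulting $U$ is Lipschitz, hence in $W^{1,p}_0\cap\lip$. Integrating $F_\Lambda(\nabla U)$ reduces to the staircase estimate of the previous step plus an $O(\eps)$ remainder from the bad set, yielding \eqref{eq:goal} for $N$ large and $\eps$ small.

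The main obstacle is the rank-one geometry of $K$: locating an explicit rank-one chord inside $K$ (or the rank-one hull of $K$) with the right orientation for self-similar iteration, and tracking how the $|X_1|^r$-gain races against the deficit growth along the staircase. This is concrete but delicate, and is precisely the point at which the Colombo--Tione machinery must be adapted from the homogeneous very-weak-solutions setting of \Cref{conjIS} to the inhomogeneous Calder\'on--Zygmund setting of \Cref{conjv1}. A secondary technical point is to control the bad set of the realization theorem well enough that it contributes negligibly to both sides of \eqref{eq:goal}, which is where the threshold $\bar q$ is pinned down.
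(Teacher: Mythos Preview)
Your overall strategy --- a staircase laminate with barycentre $0$ built near the null set $K=\{X:|X_1|^{p-2}X_1=X_2^\perp\}$, then realized by a piecewise-affine Lipschitz map --- is exactly the paper's. The gap is in your threshold analysis. A finite-order laminate with barycentre $0$ cannot be supported entirely on $K$ with some $|X_1|>0$: if it were, the realization with $\dist(\nabla U,\supp\nu)<\delta$ and $\delta\to 0$ would give $\|\nabla u\|_p\gtrsim 1$ while $\|f\|_{p'}\to 0$, violating the energy identity (which is \eqref{eq:ineq} at $r=p$). The staircase therefore necessarily carries residual mass $p_N\sim\kappa^{-aN}$ on a top point $M_N\notin K$ with $|X_1(M_N)|\sim\kappa^N$; there the deficit $|X_1|^{p-2}X_1-X_2^\perp$ is of order $\kappa^{N(p-1)}$ and contributes $-\Lambda\,\kappa^{(r-a)N}$ to $\int F_\Lambda\,d\nu_N$. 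Your formula $\int F_\Lambda\,d\nu_N\sim\sum_k p_k\kappa^{rk}$ silently drops exactly this term.

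Once it is restored, your condition $r>a$ is backwards. In that regime the gain $\sum_k\kappa^{(r-a)k}$ and the loss $\Lambda\,\kappa^{(r-a)N}$ are both of order $\kappa^{(r-a)N}$, and since you must beat \emph{every} $\Lambda$, the difference is negative once $\Lambda$ is large. The regime that works is $r<a$: then the $K$-portion of the support has total mass $1-p_N\to 1$ with $|X_1|\gtrsim 1$ there, so $\int|\nabla u|^r$ stays bounded below by a fixed positive constant, while the top-point contribution $\Lambda\,\kappa^{(r-a)N}\to 0$ as $N\to\infty$. Thus the threshold $\bar q$ is governed by the \emph{supremum} of the achievable decay exponents $a$ over all base splittings, and the substantive computation you have not addressed --- that this supremum actually exceeds $\max\{p-1,1\}$ --- is precisely where the hypothesis $p\neq 2$ enters and is not automatic.
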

Here $\nabla^\perp = (-\partial_2, \partial_1)^T$ denotes the rotated gradient. Observe that
\[
 \div(|\nabla u|^{p-2} \nabla u) = \div(|\nabla u|^{p-2} \nabla u -\nabla^\perp v)
\]
so if we set $f:=|\nabla u|^{p-2} \nabla u -\nabla^\perp v$ for $u$ and $v$ as in \Cref{th:main} we have found a counterexample to \Cref{conjv1}.

Let us remark that our threshold value for $\bar{q}$ in \Cref{th:main} is explicit, and it is the same one as for the counterexample to \Cref{conjIS}  obtained in \cite{CT22}. Precisely, \Cref{th:main} holds for any
\[
 \max\{p-1,1\} < \bar{q} < \sup_{b \in (0,\infty)} \frac{p-1}{b^{p-1}+1} + \frac{b}{b+1}.
\]
We observe that for $p=2$ there is no such $\bar{q}$. But for $p \neq 2$ we can find $\bar{q}$ as above. Of course it would be very interesting to see if this is sharp, i.e. if \Cref{conjv1} holds for any $r > \sup_{b \in (0,\infty)} \frac{p-1}{b^{p-1}+1} + \frac{b}{b+1}$.

\subsection*{Outline} We prove \Cref{th:main} adapting the crucial insights of \cite{CT22} to our situation. The idea is to utilize a trade-off: In \cite{CT22} they find a solution to $\div(|\nabla u|^{p-2}\nabla u) = 0$, but it should not belong to $W^{1,r}$. Here we want to find a solution which is Lipschitz, but we are happy to have a solution to the inhomogeneous equation $\div(|\nabla u|^{p-2}\nabla u) =\div(f)$ -- where we have a choice of $f$. For this, in \Cref{s:laminates} we recall the notion of laminates of finite orders and the convex integration result of constructing solutions adapted to the support of these laminates. In \Cref{s:constrlaminate} we construct an explicit laminate to utilize. In \Cref{s:proofofthm} we then use the convex integration solution subordinated to this laminate to prove \Cref{th:main}.

\subsection*{Acknowledgment} The author is very grateful to Phuc Cong Nguyen for discussions regarding the Iwaniec conjecture \cite{I83}, and Juan Manfredi for helpful comments. Part of this work was carried out while the author was visiting Chulalongkorn University. A.S. is an Alexander-von-Humboldt Fellow. A.S. is funded by NSF Career DMS-2044898.

\section{Laminates of finite order}\label{s:laminates}
We recall the definition of laminates of finite order, see e.g. \cite[Definition 2.2.]{AFS08}.
\begin{definition}[Laminate of finite order and elementary splittings]
The family of laminates of finite order $\mathcal{L}(\R^{2 \times 2})$ is the smallest family of probability measures in $\mathcal{M}(\R^{2 \times 2})$ with the properties
\begin{enumerate}
 \item Any dirac measure $\delta_{A} \in \mathcal{L}(\R^{2 \times 2})$ for any $A \in \R^{2 \times 2}$.
 \item If $\sum_{\tni=1}^{\tnn} \lambda_\tni \delta_{A_\tni} \in \mathcal{L}(\R^{2 \times 2})$ and $A_1 = \lambda B + (1-\lambda) C$ for some $\lambda \in [0,1]$ and $\rank (B-C) =1$. Then the probability measure
 \[
  \sum_{\tni=2}^{\tnn} \lambda_\tni \delta_{A_\tni}  + \lambda_1 (\lambda \delta_{B} + (1-\lambda) \delta_C)
 \]
is also contained in $\mathcal{L}(\R^{2 \times 2})$. The operation above is called elementary splitting, and if we can obtain a laminate of finite order $\tilde{\mu}$ out of a measure $\mu$ via finitely many elementary splittings we write
\[
 \mu \xrightarrow{\text{elementary splitting}} \tilde{\mu}.
\]

\end{enumerate}
\end{definition}

The main property of laminates of finite order $\sum_{\tni=1}^{\tnn} \lambda_\tni \delta_{A_\tni}$ that we are going to use is: we can find functions $f$ whose gradient $\nabla f$ is close to $A_{\tni}$ exactly with probability $\lambda_{\tni}$. This was proven in \cite[Lemma 3.2]{MS03}, see also \cite[Proposition 2.3.]{AFS08}. We formulate their result with the properties of relevance to us, for related results and a literature overview see also \cite[Section 2.3]{KMSX24}.
\begin{proposition}\label{pr:wigglelam}
Let $\mu = \sum_{\tni=1}^{\tnn} \lambda_\tni \delta_{A_\tni}$ be a laminate of finite order in $\R^{2 \times 2}$ with baricenter
\[
 A:= \overline{\mu} \equiv \sum_{\tni=1}^{\tnn} \lambda_{\tni} A_{\tni} \in \R^{2 \times 2}
\]
Then for any $\delta < \min_{\tni \neq \tnj} \frac{|A_\tni-A_{\tnj}|}{2}$ and any smooth, open, and bounded set $\Omega \subset \R^2$ there exists a piecewise affine Lipschitz map $w: \Omega \to \R^2$ such that
\begin{itemize}
 \item $w(x) = Ax$ on $\partial \Omega$
 \item $|\{x \in \Omega: \dist(\nabla w, A_{\tni}) < \delta\}| = \lambda_{\tni} |\Omega|$ for each $\tni=1,\ldots,\tnn$
 \item $\dist(\nabla w,\supp \mu) < \delta$ a.e.
\end{itemize}
\end{proposition}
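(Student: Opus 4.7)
The plan is to prove \Cref{pr:wigglelam} by induction on the number $N$ of elementary splittings needed to obtain $\mu$ from a Dirac mass, which is the standard approach going back to M\"uller--\v{S}ver\'ak.

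For the base case $N=0$, we have $\mu = \delta_A$ and we simply take $w(x)=Ax$. Then $\nabla w \equiv A$, so $w=Ax$ on $\partial \Omega$, the level set $\{\dist(\nabla w,A) < \delta\}$ is all of $\Omega$ (matching $\lambda_1 = 1$), and the support condition is trivial. For the inductive step, assume the proposition holds for laminates obtained by at most $N-1$ splittings, and suppose $\mu$ arises from such a $\mu' = \sum_{\tni=1}^{\tnn'}\lambda'_{\tni}\delta_{A'_\tni}$ by splitting $A'_1 = \lambda B + (1-\lambda)C$ with $\rank(B-C)=1$. I would choose an auxiliary scale $\delta' \ll \delta$ (to be fixed later), apply the inductive hypothesis to $\mu'$ to obtain a piecewise affine Lipschitz map $w': \Omega \to \R^2$ with $w'(x)=Ax$ on $\partial \Omega$, $\dist(\nabla w', \supp \mu') < \delta'$ a.e., and $|\{\dist(\nabla w', A'_\tni)<\delta'\}| = \lambda'_\tni |\Omega|$.

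The key geometric input is the classical rank-one lamination lemma: since $B-C = a \otimes n$, on any open set $P$ where $w'$ is affine with $\nabla w' = A'_1 + E_P$ and $|E_P|<\delta'$, we can construct a piecewise affine replacement $w_P : P \to \R^2$ which agrees with $w'$ on $\partial P$, whose gradient takes values arbitrarily close to $B$ on a set of measure exactly $\lambda|P|$ and arbitrarily close to $C$ on a set of measure exactly $(1-\lambda)|P|$, together with thin transition strips of total measure as small as we like. This is done explicitly on parallel strips orthogonal to $n$ and then cut off near $\partial P$ via a cubical decomposition; see \cite[Lemma~3.2]{MS03} for the precise construction. Performing this replacement on each affine piece of $w'$ that lies in the "good set" $\{\dist(\nabla w', A'_1)<\delta'\}$, and leaving $w'$ unchanged elsewhere, produces the desired map $w$. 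Boundary values are preserved since the surgery is local to each piece, and the pieces tile the subset of $\Omega$ on which $\nabla w' \approx A'_1$.

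The main obstacle is quantitative bookkeeping of the error $\delta$ across the induction, which is why I would work with a nested sequence $\delta = \delta_N \ll \delta_{N-1} \ll \ldots \ll \delta_0$ chosen so that the perturbation $E_P$ inherited from the previous step, combined with the error introduced by the rank-one surgery, keeps $\nabla w$ within $\delta$ of either $B$ or $C$ on the refined good set, and strictly farther than $\delta$ from every other $A_\tni$. This is possible because the separation $\min_{\tni \neq \tnj}|A_\tni - A_\tnj|$ is strictly positive and only finitely many splittings are needed. A second subtle point is achieving the exact measure equalities $|\{\dist(\nabla w, A_\tni)<\delta\}| = \lambda_\tni |\Omega|$ rather than mere approximate equalities; this is handled by absorbing the negligible transition regions into a further rank-one refinement targeting one of the $A_\tni$, or equivalently by choosing the strip widths in the rank-one construction to hit the prescribed volume fraction exactly. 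Since both obstacles are standard and appear explicitly in \cite{MS03,AFS08}, I would cite those references and only indicate the above inductive skeleton, as the proposition statement is the precise formulation needed in \Cref{s:proofofthm}.
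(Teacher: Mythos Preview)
The paper does not actually prove \Cref{pr:wigglelam}; it is stated as a known result and attributed to \cite[Lemma~3.2]{MS03} and \cite[Proposition~2.3]{AFS08}. Your sketch is precisely the standard inductive argument from those references, so your proposal is correct and in fact supplies more detail than the paper itself, which simply cites the literature.
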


\section{Construction of a laminate}\label{s:constrlaminate}
We will construct a laminate of finite order explicitly. It is strongly inspired by the argument used in \cite{CT22}, see also the presentation in \cite{KMSX24}.

First we define our threshold exponent $\bar{q}_1$.
\begin{equation}\label{q1}
 \bar{q}_1 := \sup_{b \in (0,\infty)} \frac{p-1}{b^{p-1}+1} + \frac{b}{b+1}
\end{equation}

The crucial observation is that $\bar{q}_1 > \max \{p-1,1\}$ if $p \neq 2$. Indeed, a simple computation yields the following,
\begin{lemma}\label{la:defq1}
Take $\bar{q}_1$ from \eqref{q1}.
Then
\begin{itemize}
 \item If $p=2$ then $\bar{q}_1 = 1$.
 \item If $p > 2$ then $\bar{q}_1 > p-1$
 \item If $1<p < 2$ then $\bar{q}_1 > 1$
\end{itemize}
\end{lemma}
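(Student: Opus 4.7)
The plan is to set $f(b) := \frac{p-1}{b^{p-1}+1} + \frac{b}{b+1}$ and analyze $\sup_{b \in (0,\infty)} f(b)$ directly in each of the three regimes.

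For $p = 2$, the expression telescopes: $f(b) = \frac{1}{b+1} + \frac{b}{b+1} = 1$ for every $b$, so $\bar{q}_1 = 1$.

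For $p > 2$, I would compare $\bar{q}_1$ to the boundary value $\lim_{b \to 0^+} f(b) = p-1$ by computing the one-sided derivative at zero. Differentiation gives
\[
f'(b) = -\frac{(p-1)^2\, b^{p-2}}{(b^{p-1}+1)^2} + \frac{1}{(b+1)^2}.
\]
Since $p - 2 > 0$, the first term vanishes as $b \to 0^+$, so $f'(0^+) = 1 > 0$. Hence $f(b) > p-1$ for $b$ slightly positive, which yields $\bar{q}_1 > p-1$.

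For $1 < p < 2$, I would instead compare $\bar{q}_1$ with $1$ by examining the behavior at infinity. Write
\[
f(b) - 1 = \frac{p-1}{b^{p-1}+1} - \frac{1}{b+1}.
\]
As $b \to \infty$, the first term is asymptotic to $(p-1)\, b^{-(p-1)}$ while the second is asymptotic to $b^{-1}$. Since $p-1 < 1$, the ratio $b^{-(p-1)}/b^{-1} = b^{2-p}$ tends to $\infty$, so the first term dominates and $f(b) - 1 > 0$ for all sufficiently large $b$. This gives $\bar{q}_1 > 1$.

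The statement is an elementary calculus exercise; there is no genuine obstacle, only the bookkeeping of which of the two competing terms is dominant in each of the three regimes. The structure above (collapse at $p=2$, derivative expansion near $0$ for $p>2$, asymptotic expansion near $\infty$ for $1<p<2$) is forced by the location at which the relevant boundary value $\max\{p-1,1\}$ is attained.
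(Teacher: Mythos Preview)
Your proof is correct and follows essentially the same route as the paper: both examine $f(b)$ near $b=0$ for $p>2$ and near $b=\infty$ for $1<p<2$. The only difference is cosmetic---the paper rearranges the inequality $f(b)>p-1$ (resp.\ $f(b)>1$) algebraically, whereas you use the derivative $f'(0^+)=1$ (resp.\ the asymptotics $b^{-(p-1)}\gg b^{-1}$) to reach the same conclusion.
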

For the convenience of the reader we discuss the
\begin{proof}
The case $p=2$ is obvious.

Assume now \underline{$p>2$}. We show that for small $b > 0$ we have
\[
  \frac{p-1}{b^{p-1}+1} + \frac{b}{b+1} > p-1.
\]
Observe
\[
\begin{split}
&  \frac{p-1}{b^{p-1}+1} + \frac{b}{b+1} > p-1\\
\Leftrightarrow&  \frac{b}{b+1} + (p-1)\brac{\frac{1}{b^{p-1}+1} -1} >0\\
\Leftrightarrow& \frac{b}{b+1} - (p-1)\frac{b^{p-1}}{b^{p-1}+1} >0\\
\Leftrightarrow& 1 > (p-1)\frac{b^{p-1} +b^{p-2}}{b^{p-1}+1} \\
\end{split}
 \]
Since $p>2$ we see that for small enough $b>0$ the last inequality is true, and we can conclude.

If \underline{$1<p < 2$}
\[
\begin{split}
&  \frac{p-1}{b^{p-1}+1} + \frac{b}{b+1} > 1\\
\Leftrightarrow& \frac{p-1}{b^{p-1}+1} > \frac{1}{b+1} \\
\Leftrightarrow& (p-1) > \frac{b^{p-1}+1}{b+1} \\
\end{split}
 \]
Since $p-1 \in (0,1)$ we see that $\lim_{b \to \infty} \frac{b^{p-1}+1}{b+1} =0$ and thus the last inequality is satisfied for all large $b \gg 1$. We can conclude.
\end{proof}

Now we are going to define our laminate of finite order. For $\tni =0,1,2,\ldots$, $p \in (1,\infty)$, and $b \in (0,\infty)$, $b \neq 1$, we consider sequences of $\R^{2 \times 2}$-matrices
\[
 A_\tni = \left ( \begin{array}{cc}
                b \tni & 0\\
                0 & \tni^{p-1}
               \end{array}
\right ), \quad \tni = 0,1,2,\ldots
\]
\[
 B_{\tni} = \left ( \begin{array}{cc}
                b(\tni-1) & 0\\
                0 & {-}b^{p-1} (\tni-1)^{p-1}
               \end{array}
\right ), \quad \tni = 1,2,\ldots
\]
and
\[
 C_{\tni} = \left ( \begin{array}{cc}
                {-} \tni& 0\\
                0 & \tni^{p-1}
               \end{array}
\right ), \quad \tni = 1,2,\ldots
\]
Clearly $\pm C_{\tni}, \pm B_{\tnj}, \pm A_{\tnell}$ are all mutually distinct since $b \neq 1$, and indeed we have a uniform minimal distance between them,
\begin{equation}\label{eq:mindist}
 \inf_{\tnn \geq 2} \inf_{A \neq \tilde{A} \in \{\pm C_{\tni}, \pm B_{\tnj}:\ \tni \in \{2,\ldots,\tnn\}\} \cup \{\pm A_{\tnn}\} }|\tilde{A}-A| \ageq_{b} 1
\end{equation}

The role of $B_{\tni}, C_{\tni}$ is as follows: if for any $\tni \geq 1$
\begin{equation}\label{eq:nablauinKp}
 \left (\begin{array}{cc}
  \partial_1 u & \partial_2 u\\
  \partial_1 v & \partial_2 v
 \end{array} \right) = \pm C_{\tni} \text{ or } \pm B_{\tni} \text{ then } |\nabla u|^{p-2} \nabla u - \nabla^\perp v = 0.
\end{equation}
The point of the latter is that this happens even though $|\nabla u| \neq 0$, i.e. \eqref{eq:goal} fails when the integrals are restricted to sets where \eqref{eq:nablauinKp} holds. The idea of our laminate is thus to ensure this situation \eqref{eq:nablauinKp} happens on a large part of the domain, and estimate the remainder to arrive at \eqref{eq:goal}.

We collect the relevant properties our laminate of finite order in the following.

\begin{proposition}\label{pr:thelaminate}
Let $p \in (1,\infty)$, and take $\bar{q}_1$ from \eqref{q1}. For any $\bar{q} \in (0,\bar{q}_1)$ there exists $b \in (0,\infty)$, $b \neq 1$, such that for any $\tnn \geq 2$ the following holds:

There exists sequences $(\bar{\alpha}_{\tni})_{\tni=2}^{\tnn}$, $(\bar{\beta}_{\tni})_{\tni=2}^{\tnn}$, $\bar{\Gamma}_\tnn$ all in $(0,1)$ such that
\begin{itemize} \item The following is a laminate of finite order with baricenter $0 \in \R^{2 \times 2}$
\[
 \sum_{\tni=2}^{\tnn} \brac{\bar{\alpha}_\tni \delta_{B_\tni} +\bar{\alpha}_\tni \delta_{-B_\tni} + \bar{\beta}_\tni \delta_{C_\tni}+\bar{\beta}_\tni \delta_{-C_\tni}} + \bar{\Gamma}_{\tnn} \delta_{A_\tnn}+\bar{\Gamma}_{\tnn} \delta_{-A_\tnn}
\]
\item We have
\[
 \bar{\Gamma}_{\tnn} \aleq_{p,b} \tnn^{-\bar{q}} \quad \forall \tnn \geq 1.
\]
\end{itemize}
\end{proposition}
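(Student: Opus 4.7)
The plan is to construct the laminate iteratively by alternating rank-one splittings in the two coordinate directions $e_1\otimes e_1$ and $e_2\otimes e_2$. The key geometric observation is that the intermediate diagonal matrices $N_k := \diag(bk,(k+1)^{p-1})$ simultaneously mediate two useful rank-one connections: $N_k - A_k$ and $N_k - B_{k+1}$ vanish in the first coordinate (so both differences are $e_2\otimes e_2$-directed), while $N_k - C_{k+1}$ and $N_k - A_{k+1}$ vanish in the second (so both are $e_1\otimes e_1$-directed). Consequently, at every level $k\ge 1$ I can perform the two-step elementary splitting
\[
 A_k = \lambda_k\, B_{k+1} + (1-\lambda_k)\, N_k,\qquad N_k = \mu_k\, C_{k+1} + (1-\mu_k)\, A_{k+1},
\]
and solving the baricenter equations gives explicitly
\[
 \lambda_k = \frac{(k+1)^{p-1} - k^{p-1}}{(k+1)^{p-1} + b^{p-1} k^{p-1}}, \qquad \mu_k = \frac{b}{(k+1)(1+b)},
\]
both lying in $(0,1)$ for every $k\ge 1$, $p>1$, and $b\in(0,\infty)$.

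First I would handle the initialization: split $\delta_0 = \tfrac12 \delta_P + \tfrac12 \delta_{-P}$ with $P:=\diag(b,0)$, which is rank-one and preserves the baricenter. Then apply the same two-step procedure to $\delta_P$ via the pivot $N_1 = \diag(b, 2^{p-1})$, producing strictly positive mass on $B_2$, $C_2$, and $A_2$ (with a modified first ratio $\lambda_*=2^{p-1}/(b^{p-1}+2^{p-1})$, adjusted because the second coordinate of $P$ is $0$ rather than $1$). Iterating the two-step splitting at $\delta_{A_k}$ for $k=2,\ldots,n-1$, and mirroring the construction on $\delta_{-P}$ by symmetry, yields a laminate of finite order supported exactly on $\{\pm B_i, \pm C_i : i=2,\ldots,n\}\cup\{\pm A_n\}$, with baricenter $0$ preserved throughout. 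All coefficients are strictly positive products of $\lambda_k, \mu_k$ and their complements, and the residual mass at $A_n$ is
\[
 \bar\Gamma_n = \tfrac12 (1-\lambda_*)(1-\mu_1)\prod_{k=2}^{n-1} (1-\lambda_k)(1-\mu_k).
\]

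The quantitative heart of the argument is the asymptotic analysis of this product. Taylor-expanding in $1/k$,
\[
 1-\lambda_k = 1 - \frac{p-1}{(1+b^{p-1})\,k} + O(k^{-2}),\qquad 1-\mu_k = 1 - \frac{b}{(1+b)\,k} + O(k^{-2}),
\]
so $\log\bigl((1-\lambda_k)(1-\mu_k)\bigr) = -q_b/k + O(k^{-2})$ with $q_b := \frac{p-1}{b^{p-1}+1} + \frac{b}{b+1}$. Summing from $k=2$ to $n-1$ and exponentiating gives $\bar\Gamma_n \le C_{p,b}\, n^{-q_b}$. Since $\bar q < \bar q_1 = \sup_b q_b$, I can select $b\in(0,\infty)\setminus\{1\}$ with $q_b > \bar q$, which yields the desired estimate $\bar\Gamma_n \aleq_{p,b} n^{-\bar q}$.

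The main technical obstacle is precisely this asymptotic step: one has to expand $\lambda_k,\mu_k$ accurately enough to identify the exact $1/k$ coefficient of $\log\rho_k$, so that the subleading $O(1/k^2)$ corrections accumulate only to an $O(1)$ error after summation rather than polluting the logarithmic rate. Once the pivots $N_k$ and the associated rank-one splittings are identified, the rest of the construction is bookkeeping via the elementary splitting rule of \Cref{s:laminates}, and the strict positivity together with the baricenter condition are immediate from the explicit formulas for $\lambda_k$ and $\mu_k$.
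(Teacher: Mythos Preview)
Your proposal is correct and follows essentially the same approach as the paper: the two-step rank-one splitting through the pivot $N_k=\diag(bk,(k+1)^{p-1})$ is exactly the content of the paper's \Cref{la:elemsplit} (your $\lambda_k,\mu_k$ coincide with the paper's $\alpha_{k+1}$ and $\beta_{k+1}/(1-\alpha_{k+1})$, so $(1-\lambda_k)(1-\mu_k)=\gamma_{k+1}$), and your asymptotic analysis of $\bar\Gamma_n$ reproduces \Cref{la:gammaest}. The only minor deviation is in the initialization: the paper's \Cref{la:laminatestep0} routes $\diag(b,0)$ through $\pm B_2$ and $\pm A_1$ and then invokes the general step at $k=1$, whereas you pass directly from $\diag(b,0)$ to $B_2,C_2,A_2$ via the pivot $N_1$---both are valid and yield the same support and the same decay rate for $\bar\Gamma_n$.
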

\Cref{pr:thelaminate} follows from \Cref{la:elemsplit}, \Cref{la:laminatestep0}, and \Cref{la:gammaest} below. More precisely, the first part of \Cref{pr:thelaminate} follows from an induction argument via the following construction, combined with the initial step in \Cref{la:laminatestep0}.
\begin{lemma}\label{la:elemsplit}
For $\tni \geq 1$ We have via two elementary splittings
\[
 \delta_{\pm A_\tni} \xrightarrow{\text{elementary splitting}}\alpha_{\tni+1} \delta_{\pm B_{\tni+1}} + \beta_{\tni+1} \delta_{\pm C_{\tni+1}} + \gamma_{\tni+1} \delta_{\pm A_{\tni+1}},
\]
with values
\[
 \alpha_{\tni+1} = \frac{(\tni+1)^{p-1}-\tni^{p-1}}{(\tni+1)^{p-1}+b^{p-1} \tni^{p-1}}
 = \frac{(1+1/\tni)^{p-1}-1}{(1+1/\tni)^{p-1}+b^{p-1}}=1-\frac{1+b^{p-1}}{(1+1/\tni)^{p-1}+b^{p-1}}
 \]
\[
  \beta_{\tni+1} = (1-\alpha_{\tni+1})\frac{b(\tni+1)-b\tni}{b(\tni+1)+(\tni+1)} = \frac{1+b^{p-1}}{(1+1/\tni)^{p-1}+b^{p-1}}\frac{b}{(b+1)} \frac{1}{\tni+1}
\]
and
\[
 \gamma_{\tni+1} = 1-\alpha_{\tni+1}-\beta_{\tni+1}.
 \]
\end{lemma}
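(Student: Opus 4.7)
The plan is to chain two rank-one splittings through a carefully chosen intermediate diagonal matrix. Since every matrix in sight is diagonal, I start from the basic fact that two diagonal $2\times 2$ matrices $\diag(x_1,y_1)$ and $\diag(x_2,y_2)$ are rank-one connected exactly when one (but not both) of the equalities $x_1=x_2$, $y_1=y_2$ holds. Inspecting the explicit formulas reveals two useful coincidences: $A_\tni$ and $B_{\tni+1}$ share the $(1,1)$-entry $b\tni$, while $C_{\tni+1}$ and $A_{\tni+1}$ share the $(2,2)$-entry $(\tni+1)^{p-1}$. This dictates a two-step strategy: first split $\delta_{A_\tni}$ along the rank-one segment through $B_{\tni+1}$, landing at an auxiliary Dirac $\delta_{M_{\tni+1}}$; then split $\delta_{M_{\tni+1}}$ along the rank-one segment through $C_{\tni+1}$ and $A_{\tni+1}$.

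The one genuinely creative step is identifying $M_{\tni+1}$. The two rank-one constraints pin down its entries: lying on the first segment forces its $(1,1)$-entry to be $b\tni$, and lying on the second segment forces its $(2,2)$-entry to be $(\tni+1)^{p-1}$, so the only candidate is $M_{\tni+1}:=\diag(b\tni,(\tni+1)^{p-1})$. With this choice in hand I perform the two elementary splittings
\[
\delta_{A_\tni}\;\to\;\alpha\,\delta_{B_{\tni+1}}+(1-\alpha)\,\delta_{M_{\tni+1}},\qquad \delta_{M_{\tni+1}}\;\to\;\beta\,\delta_{C_{\tni+1}}+(1-\beta)\,\delta_{A_{\tni+1}},
\]
and read off the weights by matching the ``free'' entry in each relation: the $(2,2)$-entry in the first equation pins $\alpha$, and the $(1,1)$-entry in the second pins $\beta$.

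A short algebraic verification then produces the stated values: matching $\tni^{p-1}=-\alpha b^{p-1}\tni^{p-1}+(1-\alpha)(\tni+1)^{p-1}$ yields $\alpha=\alpha_{\tni+1}$, while matching $b\tni=-\beta(\tni+1)+(1-\beta)b(\tni+1)$ yields $\beta=b/((b+1)(\tni+1))$. Composing the two splittings, the total weight on $\delta_{C_{\tni+1}}$ becomes $(1-\alpha_{\tni+1})\beta=\beta_{\tni+1}$ after simplification of $1-\alpha_{\tni+1}=\tni^{p-1}(1+b^{p-1})/((\tni+1)^{p-1}+b^{p-1}\tni^{p-1})$, and the residual mass $\gamma_{\tni+1}=1-\alpha_{\tni+1}-\beta_{\tni+1}$ sits on $\delta_{A_{\tni+1}}$. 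Positivity of all three weights is automatic since they arise as two successive convex combinations. The $-A_\tni$ case is entirely parallel: the involution $X\mapsto -X$ preserves both rank and convex combinations, so repeating the construction with every matrix negated produces the same coefficients on $-B_{\tni+1},-C_{\tni+1},-A_{\tni+1}$. I do not anticipate any serious obstacle; once $M_{\tni+1}$ is identified, the proof reduces to one-variable algebra, and the only genuinely non-mechanical point is the recognition that coincidence of a diagonal entry is the sole mechanism available for rank-one connections among diagonal matrices.
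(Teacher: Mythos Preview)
Your proof is correct and follows essentially the same route as the paper: the intermediate matrix you call $M_{\tni+1}=\diag(b\tni,(\tni+1)^{p-1})$ is exactly the auxiliary matrix the paper splits through, and your two rank-one steps and the resulting values of $\alpha,\beta$ coincide with theirs. Your additional remarks on how $M_{\tni+1}$ is forced, on positivity of the weights, and on the $-A_\tni$ case via the involution $X\mapsto -X$ are welcome elaborations that the paper leaves implicit.
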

\begin{proof}
Let $\tni \geq 1$, we obtain the first elementary splitting via the rank-one convex splitting
\[
\begin{split}
A_{\tni} =  \left ( \begin{array}{cc}
                b\tni & 0\\
                0 & \tni^{p-1}
               \end{array}
\right ) =& \alpha \left ( \begin{array}{cc}
                b\tni & 0\\
                0 & {-}b^{p-1} \tni^{p-1}
               \end{array}
\right ) + (1-\alpha) \left ( \begin{array}{cc}
                b\tni & 0\\
                0 & (\tni+1)^{p-1}
               \end{array}
\right )\\
=&\alpha B_{\tni+1} + (1-\alpha) \left ( \begin{array}{cc}
                b\tni & 0\\
                0 & (\tni+1)^{p-1}
               \end{array}
\right ).
\end{split}
\]
Here we have
\[
 \alpha = \frac{(\tni+1)^{p-1}-\tni^{p-1}}{(\tni+1)^{p-1}+b^{p-1} \tni^{p-1}} \in (0,1).
\]
The second elementary splitting is from the rank-one convex splitting
\[
\begin{split}
\left ( \begin{array}{cc}
                b\tni & 0\\
                0 & (\tni+1)^{p-1}
               \end{array}
\right )
=& \beta \left ( \begin{array}{cc}
                {-} (\tni+1)& 0\\
                0 & (\tni+1)^{p-1}
               \end{array}
\right ) + (1-\beta) \left ( \begin{array}{cc}
                b(\tni+1) & 0\\
                0 & (\tni+1)^{p-1}
               \end{array}
\right )\\
=&\beta C_{\tni+1} + (1-\beta) A_{\tni+1}
\end{split}
\]
and we have
\[
 \beta = \frac{b(\tni+1)-b\tni}{b(\tni+1)+(\tni+1)}
\]
\end{proof}

For $A_0 = 0 \in \R^{2 \times 2}$ we have to adapt the splitting
\begin{lemma}\label{la:laminatestep0}
We have
\[
 \delta_{A_0} \xrightarrow{\text{elementary splitting}} \frac{1}{4} \delta_{B_2} + \frac{1}{4} \delta_{-B_2} + \frac{1}{4} \delta_{A_1} + \frac{1}{4} \delta_{-A_1}
\]
\end{lemma}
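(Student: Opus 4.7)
My plan is to construct the stated laminate from $\delta_{A_0} = \delta_0$ via two elementary rank-one splittings, exploiting the diagonal structure of the target matrices. The two rank-one directions I use are the horizontal direction (varying the top-left entry, which is the direction of $A_1 - (-A_1)$ restricted to the first row) and the vertical direction (varying only the bottom-right entry, which is exactly the direction of $A_1 - B_2$).

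First, I split $\delta_0$ along the horizontal rank-one direction. Setting $P_\pm := \pm \begin{pmatrix} b & 0 \\ 0 & 0 \end{pmatrix}$, one has $0 = \tfrac{1}{2}(P_+ + P_-)$ and $P_+ - P_-$ has rank one, so the elementary splitting $\delta_0 \to \tfrac{1}{2}\delta_{P_+} + \tfrac{1}{2}\delta_{P_-}$ is valid.

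Next, I split each $\delta_{P_\pm}$ using the vertical rank-one direction. Since $A_1 - B_2 = \begin{pmatrix} 0 & 0 \\ 0 & 1+b^{p-1} \end{pmatrix}$ has rank one and $P_+$ shares the top-left entry $b$ with both $A_1$ and $B_2$, $P_+$ lies on the segment $[A_1, B_2]$ and I may elementary-split $\delta_{P_+} \to \lambda \delta_{A_1} + (1-\lambda) \delta_{B_2}$ with $\lambda \in (0,1)$ determined by $\lambda A_1 + (1-\lambda) B_2 = P_+$; symmetrically for $\delta_{P_-}$. Combining the two stages gives a laminate supported on $\{\pm A_1, \pm B_2\}$, with barycenter $0 = A_0$, and (by the $X \mapsto -X$ symmetry built into the tree) equal weights on the pair $\pm A_1$ and equal weights on the pair $\pm B_2$.

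The main obstacle is matching the exact weights $\tfrac{1}{4}$ stated in the lemma: the naive two-step construction above gives $\lambda = b^{p-1}/(1+b^{p-1})$, which equals $\tfrac{1}{2}$ only when $b = 1$, a value excluded in the construction of $A_\tni, B_\tni$. For $b \neq 1$ I would therefore insert additional intermediate rank-one refinements between the two principal splittings --- for instance, a preliminary vertical split $\delta_0 \to \tfrac{1}{2}\delta_{(0,c)} + \tfrac{1}{2}\delta_{(0,-c)}$ followed by horizontal splits into $(\pm b, \pm c)$, then further splits onto $[A_1,B_2]$ and $[-A_1,-B_2]$ --- producing a grid of intermediate diagonal atoms whose contributions to the four target atoms can be balanced, using the symmetry of the four-point configuration, into equal weights $\tfrac{1}{4}$ on each target atom. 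The remaining verification is a straightforward bookkeeping of weights along each branch of the splitting tree, relying crucially on the fact that the barycenter $0$ is invariant under $X \mapsto -X$ and that the only rank-one connected pairs within $\{\pm A_1, \pm B_2\}$ are $(A_1,B_2)$ and $(-A_1,-B_2)$.
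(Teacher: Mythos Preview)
Your first two steps coincide exactly with the paper's proof: split $\delta_0$ horizontally into $\tfrac12\delta_{P_+}+\tfrac12\delta_{P_-}$ with $P_\pm=\pm\begin{pmatrix}b&0\\0&0\end{pmatrix}$, then split each $P_\pm$ vertically onto the segment $[A_1,B_2]$ (resp.\ $[-A_1,-B_2]$). You also correctly observe that the resulting convex coefficient is $\lambda=b^{p-1}/(1+b^{p-1})$, which equals $\tfrac12$ only for $b=1$. (The paper's own proof at this point writes $\begin{pmatrix}b&0\\0&b^{p-1}\end{pmatrix}=A_1$, which is a slip: by definition $A_1=\begin{pmatrix}b&0\\0&1\end{pmatrix}$.)

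The genuine gap is in your attempted remedy. You propose to insert further intermediate rank-one splittings so that the final weights on $\pm A_1,\pm B_2$ can be ``balanced'' to $\tfrac14$ each. This cannot succeed, no matter how many intermediate steps you introduce. The determinant is rank-one affine, hence every laminate of finite order $\mu$ with barycenter $\bar\mu$ satisfies $\int\det\,d\mu=\det(\bar\mu)$. For the target measure with equal weights one computes
\[
\tfrac14\big(\det A_1+\det(-A_1)+\det B_2+\det(-B_2)\big)=\tfrac14\big(b+b-b^{p}-b^{p}\big)=\tfrac{b(1-b^{p-1})}{2},
\]
which is nonzero whenever $b\neq 1$, while $\det A_0=0$. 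Thus the equal-weight measure is \emph{not} a laminate of finite order with barycenter $0$ for $b\neq1$, and your ``straightforward bookkeeping'' could never close. The two-step construction already gives the correct laminate
\[
\frac{b^{p-1}}{2(1+b^{p-1})}\big(\delta_{A_1}+\delta_{-A_1}\big)+\frac{1}{2(1+b^{p-1})}\big(\delta_{B_2}+\delta_{-B_2}\big),
\]
and it is this measure (not the $\tfrac14$-version) that one should feed into the subsequent argument; the precise constants in this initial step are irrelevant for the decay estimate on $\bar\Gamma_{\tnn}$.
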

\begin{proof}
We use two elementary splittings
\[
\begin{split}
A_{0} =  \left ( \begin{array}{cc}
                0 & 0\\
                0 & 0
               \end{array}
\right )
=& \frac{1}{2} \left ( \begin{array}{cc}
                b & 0\\
                0 & 0
               \end{array}
\right ) + \frac{1}{2}\left ( \begin{array}{cc}
                -b & 0\\
                0 & 0
               \end{array}
\right )
\end{split},
\]
and
\[
\begin{split}
 \pm\left ( \begin{array}{cc}
                b & 0\\
                0 & 0
               \end{array}
\right ) =&  \pm \frac{1}{2}\left ( \begin{array}{cc}
                b & 0\\
                0 & -b^{p-1}
               \end{array}
\right )
 \pm \frac{1}{2}\left ( \begin{array}{cc}
                b & 0\\
                0 & b^{p-1}
               \end{array}
\right )\\
=&\frac{1}{2} \brac{\pm B_2} + \frac{1}{2} (\pm A_1)
\end{split}
\]
\end{proof}

\begin{lemma}\label{la:gammaest}
For any $p \in (1,\infty)$ and any $\bar{q} \in (0,\bar{q}_1)$ where $\bar{q}_1$ is from \eqref{q1} there exists $b \in (0,\infty)$, $b \neq 1$, with the following properties:

Let $(\gamma_{\tni})_{\tni=1}^{\tnn}$ from \Cref{la:elemsplit}.
Then we have
\begin{equation}\label{eq:betan}
 \Gamma_{\tnn} := \Pi_{\tnj=1}^\tnn \gamma_\tnj \aleq_{p,b,\bar{q}} \tnn^{-\bar{q}} \quad \text{for all } \tnn \geq 1.
\end{equation}
\end{lemma}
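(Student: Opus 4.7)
The plan is to take logarithms and extract an asymptotic expansion in which the exponent $q(b) := \frac{p-1}{1+b^{p-1}} + \frac{b}{b+1}$ appearing in the definition of $\bar{q}_1$ emerges naturally as the logarithmic decay rate of $\gamma_\tni$. Since $\Gamma_\tnn = \Pi_{\tnj=1}^\tnn \gamma_\tnj$, we have $\log \Gamma_\tnn = \sum_\tnj \log \gamma_\tnj$, so the goal reduces to showing $\sum \log \gamma_\tnj \le -q(b) \log \tnn + O(1)$ for a choice of $b$ with $q(b)>\bar{q}$.

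First I would use the identities from \Cref{la:elemsplit} to rewrite
\[
\gamma_{\tni+1} = (1-\alpha_{\tni+1})\left(1-\frac{b}{(b+1)(\tni+1)}\right) = \frac{1+b^{p-1}}{(1+1/\tni)^{p-1}+b^{p-1}}\left(1-\frac{b}{(b+1)(\tni+1)}\right).
\]
Then, using the Taylor expansions $(1+1/\tni)^{p-1} = 1 + (p-1)/\tni + O_p(\tni^{-2})$ and $(1+x)^{-1} = 1-x+O(x^2)$, I would multiply out to obtain
\[
\gamma_{\tni+1} = 1 - \frac{q(b)}{\tni} + O_{p,b}(\tni^{-2}),
\]
and therefore, since $\gamma_\tni\in(0,1)$, the expansion $\log(1+y)=y+O(y^2)$ gives $\log\gamma_{\tni+1} = -q(b)/\tni + O_{p,b}(\tni^{-2})$.

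Summing from $\tni=1$ to $\tnn-1$, the harmonic partial sum satisfies $\sum_{\tni=1}^{\tnn-1} 1/\tni = \log\tnn + O(1)$, while $\sum_{\tni\ge 1}\tni^{-2}<\infty$. Combining these yields $\log\Gamma_\tnn \le -q(b)\log\tnn + C(p,b)$, hence $\Gamma_\tnn \aleq_{p,b} \tnn^{-q(b)}$. Finally, given $\bar{q}<\bar{q}_1 = \sup_{b}q(b)$, I can pick $b\in(0,\infty)$ with $q(b)>\bar{q}$; by continuity of $q$, if the near-optimizer happened to coincide with $b=1$ I would perturb slightly to ensure $b\neq 1$ while preserving $q(b)>\bar{q}$, which gives the claimed bound.

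The main obstacle is technical bookkeeping: ensuring that the quadratic remainders in both Taylor expansions combine into a single $O_{p,b}(\tni^{-2})$ term with constants uniform in $\tni$ (so that the sum of remainders is absolutely convergent), and verifying that the finitely many small-$\tni$ contributions---where the first-order asymptotic need not yet dominate and where $\gamma_1$ must be interpreted in light of \Cref{la:laminatestep0}---amount only to a bounded additive correction. Both points are routine but merit explicit attention since the estimate on $\Gamma_\tnn$ must be uniform in $\tnn$.
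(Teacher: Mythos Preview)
Your proposal is correct and follows essentially the same route as the paper: rewrite $\gamma_{\tni+1}$ via the formula from \Cref{la:elemsplit}, Taylor-expand to obtain $-\log\gamma_{\tni+1}=\tilde q_b/\tni+O_{p,b}(\tni^{-2})$ with $\tilde q_b=q(b)$, sum the harmonic series, and choose $b$ so that $q(b)>\bar q$. The paper handles the small-$\tni$ terms by the crude bound $\Pi_{\tni=1}^{\tnn}\gamma_\tni\le\Pi_{\tni=\tnk_0}^{\tnn}\gamma_\tni$ rather than tracking $\gamma_1$ explicitly, but this is precisely the ``bounded additive correction'' you already flagged.
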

\begin{proof}
From \Cref{la:elemsplit} we have
\[
 \gamma_{\tnk+1} = \frac{1+b^{p-1}}{(1+1/\tnk)^{p-1}+b^{p-1}}(1-\frac{b}{b+1} \frac{1}{\tnk+1})
 \]
and thus
\[
 -\log \gamma_{\tnk+1} = -\log \frac{1+b^{p-1}}{(1+1/\tnk)^{p-1}+b^{p-1}} - \log (1-\frac{b}{b+1} \frac{1}{\tnk+1}).
\]
By a Taylor expansion we see
\[
 -\log \frac{1+b^{p-1}}{(1+1/\tnk)^{p-1}+b^{p-1}} = \frac{1}{\tnk} \frac{p-1}{b^{p-1}+1} + O_b(\frac{1}{\tnk^2}) \quad \text{as $\tnk \to \infty$}.
 \]
Similarly,
\[
 - \log (1-\frac{b}{(b+1)} \frac{1}{\tnk+1}) = \frac{1}{\tnk } \frac{b}{b+1} + O_b(\frac{1}{\tnk^2}) \quad \text{as $\tnk \to \infty$}
\]
and thus if we set
\[
 \tilde{q}_b := \frac{p-1}{b^{p-1}+1} + \frac{b}{b+1}
\]
then we have
\[
 -\log \gamma_{\tnk+1} = \frac{1}{\tnk} \tilde{q}_b + O_b(\frac{1}{\tnk^2})
\]
In particular, by the definition of  $\bar{q}_1$ in \eqref{q1}, if $\bar{q} < \bar{q}_1$ we find some $b >0$ such that $\bar{q} < \tilde{q}_b$, and there exists some $\tnk_0 \geq 1$ such that
\[
 \gamma_{\tnk+1} \leq e^{-\frac{1}{\tnk} \bar{q}} \quad \forall \tnk \geq \tnk_0.
\]
Thus
\[
 \Pi_{\tni=1}^{\tnn} \gamma_{\tni} \leq \Pi_{\tni=\tnk_0}^{\tnn} \gamma_{\tni} \leq e^{-\bar{q}\sum_{\tnk = \tnk_0}^{\tnn} \frac{1}{\tnk}}\leq e^{-\bar{q} \log (\tnn - \tnk_0) } = (\tnn-\tnk_0)^{-\bar{q}}
\]
where we have used
\[
 \sum_{n=1}^\tni \frac{1}{n} \geq \log \tni.
\]
We thus find
\[
 \Pi_{\tni=1}^{\tnn} \gamma_{\tni} \leq \Pi_{\tni=\tnk_0}^{\tnn} \gamma_{\tni} \leq e^{-\bar{q}\sum_{\tnk = \tnk_0}^{\tnn} \frac{1}{\tnk}}\leq e^{-\bar{q} \log (\tnn - \tnk_0) } \aleq_{q} \tnn^{-\bar{q}} \quad \forall \tnn \geq 2 \tnk_0
\]
Since the statement
\[
 \Pi_{\tni=1}^{\tnn} \gamma_{\tni}  \aleq \tnn^{-\bar{q}} \quad \tnn = 1,\ldots,2\tnk_0
\]
is trivial we can conclude.
\end{proof}

\section{Proof of Theorem~\ref{th:main}}\label{s:proofofthm}
Fix $\Lambda > 0$, take $\bar{q} \in (\max\{p-1,1\},\bar{q}_1)$ where $\bar{q}_1$ is from \eqref{q1}. By \Cref{la:defq1} such a $\bar{q}$ exists.

We will assume for now $p>2$. We indicate the minor changes of the argument for $p \in (1,2)$ at the end of the proof.

For some $\tnn \gg 1$ yet to be chosen in dependence of $\Lambda$, take the laminate of finite order from \Cref{pr:thelaminate}, together with $b \in (0,\infty)$, $b \neq 1$.
For a $\delta > 0$ also yet to be chosen in dependence of $\Lambda$, we apply \Cref{pr:wigglelam} and obtain a piecewise Lipschitz map $w =: \left ( \begin{array}{c} u\\v\end{array}\right): \B^2 \to \R^2$.

We claim, that $\tnn$ suitably large and $\delta$ suitably small (both in dependence of $\Lambda$) we have
\begin{equation}\label{eq:claim}
 \int_{\B^2} |\nabla u(x)|^r > \Lambda \int_{\B^2} \abs{|\nabla u(x)|^{p-2} \nabla u(x) -\nabla^\perp v(x)}^{\frac{r}{p-1}}.
\end{equation}
which would prove \Cref{th:main}.

Assume, by contradiction, that \eqref{eq:claim} is false, i.e. assume
\begin{equation}\label{eq:claimbycontradiction}
 \int_{\B^2} |\nabla u(x)|^r \leq \Lambda \int_{\B^2} \abs{|\nabla u(x)|^{p-2} \nabla u(x) -\nabla^\perp v(x)}^{\frac{r}{p-1}}.
\end{equation}

We introduce the notation for $A \in \R^{2 \times 2}$
\[
 \Omega_{A} := \left \{x \in \B^2: \dist\brac{\nabla \brac{\begin{array}{c} u\\ v\end{array} }(x), A} < \delta \right \},
\]
which is well-defined up to a negligible zero-set.
From the properties of \Cref{pr:wigglelam} we have (assuming $\delta$ is suitably small, where in view of \eqref{eq:mindist} this smallness is independent of $\tnn$) for a zeroset $N$ we have the disjoint decomposition
\begin{equation}
 \B^2 = N \dot{\cup} \Omega_{A_{\tnn}} \dot{\cup} \Omega_{-A_{\tnn}} \dot{\cup} \dot{\bigcup}_{\tni=2}^{\tnn} \brac{\Omega_{B_{\tni}} \dot{\cup} \Omega_{-B_{\tni}} \dot{\cup} \Omega_{C_{\tni}} \dot{\cup} \Omega_{-C_{\tni}}}.
\end{equation}
Moreover we have precise size estimates,
\begin{equation}\label{eq:Annguy}
 |\Omega_{\pm A_{\tnn}}| = \bar{\Gamma}_{\tnn}|\B^2|,
\end{equation}
\[
 |\Omega_{\pm B_{\tni}}| = \bar{\alpha}_{\tni} |\B^2|,
\]
\[
 |\Omega_{\pm C_{\tni}}| = \bar{\beta}_{\tni} |\B^2|.
\]

The idea is simple: Estimate $\abs{|\nabla u|^{p-2} \nabla u - \nabla^\perp v}^{\frac{r}{p-1}}$ and $|\nabla u|^{r}$ explicitly w.r.t these sets, sum up, and arrive at a contradiction to \eqref{eq:claimbycontradiction}.
\begin{lemma}
Assume $p > 2$. There exists $\delta_0 > 0$ such that for any $\tnn \in \N$, any small $\eps \in (0,1)$, and any $\delta  \in (0,\delta_0)$ we have
 \begin{equation}\label{eq:up}
\begin{split}
 &\int_{\B^2} \abs{|\nabla u(x)|^{p-2} \nabla u(x) -\nabla^\perp v(x)}^{\frac{r}{p-1}}\\
 \aleq& \sum_{\tni=2}^{\tnn} \brac{\frac{\delta^{r}}{\eps^{r\frac{p-2}{p-1}}} |\Omega_{B_{\tni}}| + \eps^{\frac{r}{p-1}}\int_{\Omega_{B_{\tni}}} |\nabla u(x)|^{r}}\\
 &+\sum_{\tni=2}^{\tnn} \brac{\frac{\delta^{r}}{\eps^{r\frac{p-2}{p-1}}} |\Omega_{-B_{\tni}}| + \eps^{\frac{r}{p-1}}\int_{\Omega_{-B_{\tni}}} |\nabla u(x)|^{r}}\\
 &+\sum_{\tni=2}^{\tnn} \brac{\frac{\delta^{r}}{\eps^{r\frac{p-2}{p-1}}} |\Omega_{C_{\tni}}| + \eps^{\frac{r}{p-1}}\int_{\Omega_{C_{\tni}}} |\nabla u(x)|^{r}}\\
 &+\sum_{\tni=2}^{\tnn} \brac{\frac{\delta^{r}}{\eps^{r\frac{p-2}{p-1}}} |\Omega_{-C_{\tni}}| + \eps^{\frac{r}{p-1}}\int_{\Omega_{-C_{\tni}}} |\nabla u(x)|^{r}}\\
 &+\tnn^{r-\bar{q}}
 \end{split}
 \end{equation}
\end{lemma}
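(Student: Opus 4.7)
The strategy is to estimate $\bigl||\nabla u|^{p-2}\nabla u - \nabla^\perp v\bigr|^{r/(p-1)}$ on each of the disjoint pieces $\Omega_{\pm A_\tnn},\Omega_{\pm B_\tni},\Omega_{\pm C_\tni}$ separately. On the sets $\Omega_{\pm B_\tni},\Omega_{\pm C_\tni}$ the identity \eqref{eq:nablauinKp} says that $|\nabla u|^{p-2}\nabla u - \nabla^\perp v$ vanishes identically at the idealized matrix value $\pm B_\tni$ or $\pm C_\tni$, and since $\nabla w$ is only $\delta$-close to such a value, a local Lipschitz estimate for $\xi \mapsto |\xi|^{p-2}\xi$ together with the triangle inequality will control the integrand by $\delta$ times an appropriate power of $|\nabla u|$; Young's inequality will then split the resulting bound into the two terms appearing in \eqref{eq:up}. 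On the pieces $\Omega_{\pm A_\tnn}$ the cancellation is absent, but the set has small measure $\aleq \tnn^{-\bar q}$ by \Cref{pr:thelaminate}, which combined with a crude pointwise bound produces the residual $\tnn^{r-\bar q}$.

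On $\Omega_{\pm B_\tni}$ (and analogously on $\Omega_{\pm C_\tni}$), I would first invoke the classical bound
\[
 \bigl||\xi|^{p-2}\xi - |\eta|^{p-2}\eta\bigr| \aleq (|\xi|+|\eta|)^{p-2}|\xi-\eta|,
\]
valid for $p > 2$, applied to $\xi = \nabla u$ and $\eta$ equal to the first row of $\pm B_\tni$. Combining this with \eqref{eq:nablauinKp} and the triangle inequality, and noting that for $\tni \geq 2$ and $\delta$ smaller than some $\delta_0$ depending only on $b,p$ the quantity $|\nabla u|$ is comparable to $b(\tni-1)$, I obtain the pointwise estimate
\[
 \bigl||\nabla u|^{p-2}\nabla u - \nabla^\perp v\bigr| \aleq \delta\bigl(1 + |\nabla u|^{p-2}\bigr).
\]
Raising to the power $r/(p-1)$ and applying Young's inequality to $\delta^{r/(p-1)}|\nabla u|^{r(p-2)/(p-1)}$ with conjugate exponents $p-1$ and $(p-1)/(p-2)$, tuned by the free parameter $\eps$, yields
\[
 \bigl||\nabla u|^{p-2}\nabla u - \nabla^\perp v\bigr|^{r/(p-1)} \aleq \frac{\delta^r}{\eps^{r(p-2)/(p-1)}} + \eps^{r/(p-1)} |\nabla u|^r,
\]
and integration produces the first four groups of summands on the right of \eqref{eq:up}.

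On $\Omega_{\pm A_\tnn}$ the matrix $\nabla w$ is $\delta$-close to $\pm A_\tnn$, whose entries have magnitude at most $\tnn^{p-1}$, so pointwise both $||\nabla u|^{p-2}\nabla u|$ and $|\nabla^\perp v|$ are $\aleq \tnn^{p-1}$. Combining this with the size estimate \eqref{eq:Annguy} and the bound $\bar\Gamma_\tnn \aleq \tnn^{-\bar q}$ from \Cref{pr:thelaminate} gives
\[
 \int_{\Omega_{\pm A_\tnn}} \bigl||\nabla u|^{p-2}\nabla u - \nabla^\perp v\bigr|^{r/(p-1)} \aleq \tnn^r \cdot \tnn^{-\bar q} = \tnn^{r-\bar q},
\]
which is the last summand in \eqref{eq:up}. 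Summing over the disjoint decomposition of $\B^2$ concludes the proof.

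The main technical step is the Young split, which must be calibrated so that the exponents of $\delta$ and $\eps$ land exactly as in the statement: one applies Young's inequality to $\delta^{r/(p-1)}|\nabla u|^{r(p-2)/(p-1)}$ with conjugate exponents $p-1$ and $(p-1)/(p-2)$ (here the assumption $p > 2$ is essential to guarantee $(p-1)/(p-2) > 1$) and chooses the scaling so that the coefficient of $\delta^r$ equals $\eps^{-r(p-2)/(p-1)}$; the coefficient of $|\nabla u|^r$ then comes out to $\eps^{r/(p-1)}$ automatically. A minor bookkeeping point is absorbing the leftover $\delta^{r/(p-1)}$ term produced by the additive $1$ in $\delta(1+|\nabla u|^{p-2})$ into the first summand of the Young bound, which is harmless for $\delta$ sufficiently small.
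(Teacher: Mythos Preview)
Your proposal is correct and follows essentially the same route as the paper: perturb off the exact identity \eqref{eq:nablauinKp} using the Lipschitz bound $\bigl||\xi|^{p-2}\xi-|\eta|^{p-2}\eta\bigr|\aleq(|\xi|+|\eta|)^{p-2}|\xi-\eta|$ (the paper writes this via $\int_0^1|\nabla u+sU|^{p-2}\,ds$), apply Young's inequality with exponents $p-1$ and $(p-1)/(p-2)$, and treat $\Omega_{\pm A_{\tnn}}$ by the crude size bound $\bar\Gamma_{\tnn}\aleq \tnn^{-\bar q}$. The only difference in presentation is that the paper applies Young's inequality before raising to the power $r/(p-1)$, whereas you do it afterwards; the outcome is identical.

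One small slip: your final bookkeeping remark is not quite right. You cannot absorb the leftover $\delta^{r/(p-1)}$ into $\delta^{r}/\eps^{r(p-2)/(p-1)}$ by taking $\delta$ small, since for $p>2$ the exponent $r/(p-1)$ is smaller than $r$ and hence $\delta^{r/(p-1)}\gg \delta^{r}$ as $\delta\to 0$. The correct disposal (which you in fact already set up when you observed $|\nabla u|\aeq_b b(\tni-1)\geq b$ on $\Omega_{\pm B_{\tni}}$) is to note that $|\nabla u|\ageq_b 1$ there, so $1\aleq_b |\nabla u|^{p-2}$ and the additive $1$ is absorbed into the $|\nabla u|^{p-2}$ term \emph{before} raising to the power and applying Young. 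This is exactly how the paper handles the stray $V^\perp$ contribution in arriving at \eqref{eq:estpl2}.
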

\begin{proof}
Fix any $\tni$. By the observation \eqref{eq:nablauinKp}, for any $x \in \Omega_{B_{\tni}}$ there are $U(x)$, $V(x) \in \R^2$, $|U(x)|, |V(x)| < \delta$ such that
\[
 |\nabla u(x)+U(x)|^{p-2} \brac{\nabla u(x) +U(x)} - \nabla^\perp v(x) - V^\perp(x) = 0.
\]
Consequently,
\[
 |\nabla u(x)|^{p-2} \nabla u(x) -\nabla^\perp v(x) =  V^\perp(x) + |\nabla u(x)|^{p-2} \nabla u(x)- |\nabla u(x)+U(x)|^{p-2} \brac{\nabla u(x) +U(x)},
\]
and since $p>2$ we have
\[
 \abs{|\nabla u(x)|^{p-2} \nabla u(x)- |\nabla u(x)+U(x)|^{p-2} \brac{\nabla u(x) +U}}\aleq \int_0^1 |\nabla u(x) + s U(x)|^{p-2} ds\, |U(x)|
\]
We have that $|\nabla u(x)| \aeq_{b} \tni$ (and $\tni \geq 1$) so in particular if we choose $\delta$ small enough (not depending on $\tnn$!) we have
\begin{equation}\label{eq:estpl2}
 \abs{|\nabla u(x)|^{p-2} \nabla u(x) -\nabla^\perp v(x)} \aleq \delta |\nabla u(x)|^{p-2} \quad \text{for any $x \in \Omega_{B_{\tni}}$}.
\end{equation}
In particular, from Young's inequality for any $\eps > 0$
\[
 \abs{|\nabla u(x)|^{p-2} \nabla u(x) -\nabla^\perp v(x)} \aleq \frac{\delta^{p-1}}{\eps^{p-2}}  + \eps |\nabla u(x)|^{p-1}.
\]

We conclude that for any $r\geq p-1$ and any $\eps > 0$ we have
\[
 \int_{\Omega_{\pm B_{\tni}}} \abs{|\nabla u(x)|^{p-2} \nabla u(x) -\nabla^\perp v(x)}^{\frac{r}{p-1}} \aleq \frac{\delta^{r}}{\eps^{r\frac{p-2}{p-1}}} |\Omega_{\pm B_{\tni}}| + \eps^{\frac{r}{p-1}}\int_{\Omega_{\pm B_{\tni}}} |\nabla u(x)|^{r}
\]
Similarly
\[
 \int_{\Omega_{\pm C_{\tni}}} \abs{|\nabla u(x)|^{p-2} \nabla u(x) -\nabla^\perp v(x)}^{\frac{r}{p-1}} \aleq \frac{\delta^{r}}{\eps^{r\frac{p-2}{p-1}}} |\Omega_{\pm C_{\tni}}| + \eps^{\frac{r}{p-1}}\int_{\Omega_{\pm C_{\tni}}} |\nabla u(x)|^{r}
\]

It remains to estimate
\[
\begin{split}
 &\int_{\Omega_{\pm A_{\tnn}}} \abs{|\nabla u(x)|^{p-2} \nabla u(x) -\nabla^\perp v(x)}^{\frac{r}{p-1}}\\
 \end{split}
 \]
Recall that
\[
 A_\tnn = \left ( \begin{array}{cc}
                b\tnn & 0\\
                0 & \tnn^{p-1}
               \end{array}
\right ).
\]
Thus, we make a rough estimate, and with the help of \eqref{eq:Annguy}
\[
 \int_{\Omega_{\pm A_{\tnn}}} \abs{|\nabla u(x)|^{p-2} \nabla u(x) -\nabla^\perp v(x)}^{\frac{r}{p-1}}
 \aleq_{b} \bar{\Gamma}_{\tnn} \tnn^r  \aleq \tnn^{r-\bar{q}}.
 \]

 Combining the previous estimates we have shown \eqref{eq:up}.
\end{proof}

On the other hand, we clearly have
\begin{lemma}
\begin{equation}\label{eq:down}
\begin{split}
 \int_{\B^2} |\nabla u|^r \geq& \sum_{\tni=2}^{\tnn} \int_{\Omega_{B_{\tni}} \dot{\cup} \Omega_{-B_{\tni}} \dot{\cup} \Omega_{C_{\tni}} \dot{\cup} \Omega_{-C_{\tni}}} |\nabla u|^{r}.
 \end{split}
 \end{equation}
\end{lemma}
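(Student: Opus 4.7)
The plan is to derive this lower bound directly from the disjoint decomposition of $\B^2$ established just before the upper-bound lemma, together with the pointwise non-negativity of the integrand $|\nabla u|^r$. No delicate estimate is required here; the lemma is a bookkeeping step setting up the final contradiction.

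More precisely, I would first recall the decomposition
\[
 \B^2 = N \,\dot{\cup}\, \Omega_{A_{\tnn}} \,\dot{\cup}\, \Omega_{-A_{\tnn}} \,\dot{\cup}\, \dot{\bigcup}_{\tni=2}^{\tnn} \brac{\Omega_{B_{\tni}} \,\dot{\cup}\, \Omega_{-B_{\tni}} \,\dot{\cup}\, \Omega_{C_{\tni}} \,\dot{\cup}\, \Omega_{-C_{\tni}}},
\]
where $N$ has Lebesgue measure zero and the sets are pairwise disjoint (valid once $\delta$ is smaller than the uniform separation constant from \eqref{eq:mindist}, which was already arranged in the previous lemma). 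Since $|\nabla u|^r \geq 0$ almost everywhere on $\B^2$, I can drop the contributions from $N$ (which is negligible) and from $\Omega_{\pm A_{\tnn}}$ (which is non-negative), obtaining
\[
 \int_{\B^2} |\nabla u|^r \geq \sum_{\tni=2}^{\tnn} \int_{\Omega_{B_{\tni}} \dot{\cup} \Omega_{-B_{\tni}} \dot{\cup} \Omega_{C_{\tni}} \dot{\cup} \Omega_{-C_{\tni}}} |\nabla u|^{r},
\]
by the countable additivity of the integral over a disjoint (in fact finite) union, which is exactly \eqref{eq:down}.

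There is essentially no obstacle — the only subtlety worth flagging is that the disjointness of the $\Omega_A$'s truly relies on the uniform lower bound \eqref{eq:mindist} on $|\tilde{A}-A|$ for distinct matrices in the support of the laminate, and on choosing $\delta$ strictly less than half this separation. Both conditions were imposed when invoking \Cref{pr:wigglelam}, so the decomposition is legitimate and the lemma follows. This bound, combined with the upper bound \eqref{eq:up} from the previous lemma, will be plugged into \eqref{eq:claimbycontradiction} to close the contradiction argument.
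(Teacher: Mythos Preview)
Your proposal is correct and matches the paper's approach: the paper states this lemma as ``clearly'' true without further argument, and your justification via the disjoint decomposition and non-negativity of $|\nabla u|^r$ is exactly the implicit reasoning.
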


\begin{proof}[Conclusion of the proof of \Cref{th:main} for $p>2$]
Since we assume \eqref{eq:claimbycontradiction}, the estimates \eqref{eq:up} and \eqref{eq:down} imply
\[
\begin{split}
 &\sum_{\tni=2}^{\tnn} \int_{\Omega_{B_{\tni}} \dot{\cup} \Omega_{-B_{\tni}} \dot{\cup} \Omega_{C_{\tni}} \dot{\cup} \Omega_{-C_{\tni}}} |\nabla u|^{r}\\
 \aleq& \sum_{\tni=2}^{\tnn} \Lambda \frac{\delta^{r}}{\eps^{r\frac{p-2}{p-1}}} \brac{|\Omega_{B_{\tni}}|+|\Omega_{-B_{\tni}}|+|\Omega_{C_{\tni}}|+|\Omega_{-C_{\tni}}|}\\
 &+\sum_{\tni=2}^{\tnn} \Lambda  \eps^{\frac{r}{p-1}}\int_{\Omega_{B_{\tni}} \dot{\cup} \Omega_{-B_{\tni}} \dot{\cup} \Omega_{C_{\tni}} \dot{\cup} \Omega_{-C_{\tni}}} |\nabla u(x)|^{r}\\
 &+\Lambda \tnn^{r-\bar{q}}.
 \end{split}
\]
Since $r>p-1$, we choose $\eps$ so small such that we can absorb the $\Lambda \eps^{\frac{r}{p-1}}$-terms to arrive at
\[
\begin{split}
 &\sum_{\tni=2}^{\tnn} \int_{\Omega_{B_{\tni}} \dot{\cup} \Omega_{-B_{\tni}} \dot{\cup} \Omega_{C_{\tni}} \dot{\cup} \Omega_{-C_{\tni}}} |\nabla u|^{r}\\
 \aleq& \sum_{\tni=2}^{\tnn} \Lambda \frac{\delta^{r}}{\eps^{r\frac{p-2}{p-1}}} \brac{|\Omega_{B_{\tni}}|+|\Omega_{-B_{\tni}}|+|\Omega_{C_{\tni}}|+|\Omega_{-C_{\tni}}|} +\Lambda \tnn^{r-\bar{q}}.
 \end{split}
\]
But observe that on $\Omega_{\pm B_{\tni}}$ and $\Omega_{\pm C_{\tni}}$ we have $|\nabla u| \aeq_{b} \tni \geq 1$, so that we arrive at
\[
\begin{split}
 &\sum_{\tni=2}^{\tnn} \brac{|\Omega_{B_{\tni}}|+|\Omega_{-B_{\tni}}|+|\Omega_{C_{\tni}}|+|\Omega_{-C_{\tni}}|} \\
 \aleq& \sum_{\tni=2}^{\tnn} \Lambda \frac{\delta^{r}}{\eps^{r\frac{p-2}{p-1}}} \brac{|\Omega_{B_{\tni}}|+|\Omega_{-B_{\tni}}|+|\Omega_{C_{\tni}}|+|\Omega_{-C_{\tni}}|} +\Lambda \tnn^{r-\bar{q}}.
 \end{split}
\]Choose $\delta$ even smaller (depending on $\eps$ and $\Lambda$), then we can absorb the remaining $\Lambda$ term and have shown
\[
 \sum_{\tni=2}^{\tnn} \brac{|\Omega_{B_{\tni}}|+|\Omega_{-B_{\tni}}|+|\Omega_{C_{\tni}}|+|\Omega_{-C_{\tni}}|}  \\
 \aleq\Lambda \tnn^{r-\bar{q}}.
\]
Now
\[
 \sum_{\tni=2}^{\tnn} \brac{|\Omega_{B_{\tni}}|+|\Omega_{-B_{\tni}}|+|\Omega_{C_{\tni}}|+|\Omega_{-C_{\tni}}|}  = |\B^2 \setminus \brac{\Omega_{A_{\tnn}} \dot{\cup} \Omega_{-A_{\tnn}}} | \overset{\eqref{eq:Annguy}}{=} |\B^2| (1-2\overline{\Gamma}_{N}) \ageq |\B|^2 (1-c\tnn^{-\bar{q}}).
\]
Thus, the estimate we have is
\[
\begin{split}
 (1-c\tnn^{-\bar{q}}) \leq C \Lambda \tnn^{r-\bar{q}},
\end{split}
\]
where the constants $c,C$ depend on $\Lambda$, $p$, $b$, but they are independent of $\tnn$. Since $r \in (p-1,\bar{q})$ the above is a contradiction for large $\tnn$. Thus \eqref{eq:claimbycontradiction} must be false and thus \eqref{eq:claim} is proven, at least under the assumption for $p>2$. \end{proof}

\begin{remark}
The above proof assumes $p > 2$. If $p \in (1,2)$ we simply need to adapt \eqref{eq:estpl2} into
\[
 \abs{|\nabla u(x)|^{p-2} \nabla u(x) -\nabla^\perp v(x)} \aleq \delta^{\frac{p-1}{2}} |\nabla u(x)|^{\frac{p-1}{2}} \quad \text{for any $x \in \Omega_{B_{\tni}}$, $p \in (1,2)$}.
\]
and continue with an analogous argument.
\end{remark}

\bibliographystyle{abbrv}
\bibliography{bib}

\end{document}